\RequirePackage{etex}
\documentclass[a4paper]{article}
\usepackage{amsmath,amssymb,amsthm,enumerate}
\usepackage{microtype}
\usepackage[top=3cm,bottom=3cm,left=2.5cm,right=2.5cm]{geometry}
\usepackage{tikz}
\usetikzlibrary{arrows,cd}
\usepackage[hypertexnames=false]{hyperref}
\hypersetup{
  colorlinks=true,
	citecolor=red,
	linkcolor=blue,
	urlcolor=orange
	}

\numberwithin{equation}{section}

\newtheorem{theorem}{Theorem}[section]

\theoremstyle{definition}
\newtheorem{remark}[theorem]{Remark}

\newtheorem{example}[theorem]{Example}

\newtheorem{definition-theorem}[theorem]{Definition-Theorem}
\newtheorem{question}[theorem]{Question}

\newcommand{\rad}{\operatorname{\mathrm{rad}}}

\newcommand{\Ext}{\operatorname{\mathrm{Ext}}}

\newcommand{\add}{\operatorname{\mathrm{add}}}
\newcommand{\Ind}{\operatorname{\mathrm{Ind}}}
\newcommand{\Res}{\operatorname{\mathrm{Res}}}

\newcommand{\setmid}{\; \middle|\;}
\newcommand{\inertiagp}{I}
\newcommand{\sttilt}{\operatorname{\mathrm{s\tau-tilt}}}
\newcommand{\stautilt}{\operatorname{\mathrm{s\tau-tilt}}}
\newcommand{\taurigid}{\operatorname{\mathrm{\tau-rigid}}}
\newcommand{\lmod}{\operatorname{\mathrm{\hspace{-2pt}-mod}}}

\newcommand{\Addresses}{{
  \bigskip
  \footnotesize

  Ryotaro~KOSHIO\,
  \par\nopagebreak
  \textsc{Department of Mathematics, Tokyo University of Science}
  \par\nopagebreak
	1-3, Kagurazaka, Shinjuku-ku, Tokyo, 162-8601, Japan
  \par\nopagebreak
  E-mail: \href{mailto:1120702@ed.tus.ac.jp}{1120702@ed.tus.ac.jp}

  \medskip

  Yuta~KOZAKAI\,
  \par\nopagebreak
  \textsc{Department of Mathematics, Tokyo University of Science}
	\par\nopagebreak
	1-3, Kagurazaka, Shinjuku-ku, Tokyo, 162-8601, Japan
	\par\nopagebreak
  E-mail: \href{mailto:kozakai@rs.tus.ac.jp}{kozakai@rs.tus.ac.jp}
}}

\begin{document}

\title{
  A characterization for induced modules to be support $\tau$-tilting modules
  % A characterization of modules in which the induced modules are support $\tau$-tilting modules
  \footnote{\emph{Mathematics Subject Classification} (2020). 20C20, 16G10.}
  \footnote{\emph{Keywords.} Support \(\tau\)-tilting modules, blocks of finite groups, induction functors}
}
\author{Ryotaro~KOSHIO \and Yuta~KOZAKAI}

\maketitle
\begin{abstract}
  Let $\tilde{G}$ be a finite group and $G$ a normal subgroup of $\tilde{G}$. In this paper, we give a necessary and sufficient condition for $\Ind_G^{\tilde{G}}M$ to be a support \(\tau\)-tilting $k\tilde{G}$-module for a $kG$-module $M$. Moreover, we give the block version of the result.
\end{abstract}

\section{Introduction}\label{introduction}
Support \(\tau\)-tilting modules introduced in \cite{MR3187626} play important roles in the representation theory of finite-dimensional algebras.
In fact, they correspond to various representation theoretical objects such as two-term silting complexes, functorially finite torsion classes, left finite semibricks, two-term simple-minded collections and more (see \cite{MR3187626, MR4139031, MR3220536, MR3178243}).

On the other hand, the classifications and characterizations of support \(\tau\)-tilting modules over group algebras are useful tools for solving Brou\'{e}'s Abelian Defect Group Conjecture.
In fact, as mentioned above, support \(\tau\)-tilting modules are in bijection with two-term silting complexes and simple-minded collections.
Moreover, two-term silting complexes over group algebras (or blocks of finite groups) are consistent with two-term tilting complexes, and simple-minded collections are important for Okuyama-methods, which is very important to consider the conjecture (see \cite{okuyama1997some,MR1947972}).

Thus, the authors have given methods to get support \(\tau\)-tilting modules over group algebras or blocks of finite groups \cite{10.55937/sut/1670501315,KK2,MR4243358}.
The methods taken in these contexts are obtaining support \(\tau\)-tilting \(k\tilde{G}\)-modules from support \(\tau\)-tilting \(kG\)-modules using the induction functor \(\Ind_G^{\tilde{G}}:=k\tilde{G}\otimes_{kG}\bullet\), where \(G\) is a normal subgroup of a finite group \(\tilde{G}\).
Naturally, the following question arises:
\begin{question}
  What are classes of the \(kG\)-modules which are
  sent to the support \(\tau\)-tilting \(k\tilde{G}\)-modules by the induction functor \(\Ind_G^{\tilde{G}}\)?
\end{question}

In other words, we are interested in
clarifying the set
\[
  (\Ind_G^{\tilde{G}})^{-1}(\sttilt k\tilde{G}):=\left\{ M\in kG\lmod \setmid \Ind_G^{\tilde{G}}M\in \sttilt k\tilde{G} \right\}/=_{\add},
\]
here, \(M=_{\add}M'\) means \(\add M=\add M'\) for \(M, M'\in kG\lmod\), which gives an equivalent relation on \(kG\lmod\).
In \cite{10.55937/sut/1670501315,KK2,MR4243358}, some subsets in the set \((\Ind_G^{\tilde{G}})^{-1}(\sttilt k\tilde{G})\) are given under some assumptions, but the set is never clarified completely.
In this paper, we give the complete answer of the question for any finite group \(\tilde{G}\) and its normal subgroup \(G\).
\begin{theorem}\label{main-theorem-1}
  Let \(G\) be a normal subgroup of a finite group \(\tilde{G}\).
  For a \(kG\)-module \(M\), the induced module \(\Ind_G^{\tilde{G}}M\) is a support \(\tau\)-tilting \(k\tilde{G}\)-module if and only if \(M\) is a \(\tau\)-rigid \(kG\)-module such that \(\bigoplus_{\tilde{g}\in[\tilde{G}/G]}\tilde{g}M\) is a support \(\tau\)-tilting \(k\tilde{G}\)-modules, that is, it holds that
  \[
    (\Ind_G^{\tilde{G}})^{-1}(\sttilt k\tilde{G})
    =\left\{ M\in \taurigid kG \setmid \bigoplus_{\tilde{g}\in [\tilde{G}/G]}\tilde{g}M \in \sttilt kG \right\}.
  \]
\end{theorem}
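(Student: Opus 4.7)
The plan is to translate the support $\tau$-tilting condition via the Adachi--Iyama--Reiten (AIR) bijection into the language of 2-term silting complexes in $K^b(\mathrm{proj})$, and then to transfer the silting condition across the adjunction $(\Ind_G^{\tilde G},\,\Res_G^{\tilde G})$.

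First, I would note that $\Ind_G^{\tilde G}$ is exact and carries projectives to projectives (with $\Ind_G^{\tilde G} kG = k\tilde G$), hence extends termwise to a triangle functor $K^b(\mathrm{proj}\,kG) \to K^b(\mathrm{proj}\,k\tilde G)$ commuting with $H^0$. For a $kG$-module $M$, I would form $P^\bullet_M := (P^{-1}\oplus P \xrightarrow{(d,0)} P^0)\in K^b(\mathrm{proj}\,kG)$, the 2-term complex associated to $M$ via AIR (so $P^{-1}\xrightarrow{d}P^0\to M\to 0$ is the minimal projective presentation and $P$ is the maximal direct summand of $kG$ with $\Hom(P,M)=0$). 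By the adjunction together with Mackey's decomposition $\Res_G^{\tilde G}\Ind_G^{\tilde G}X\cong\bigoplus_{\tilde g\in[\tilde G/G]}\tilde g X$ (valid for the normal subgroup $G$),
\[
  \Hom_{k\tilde G}(\Ind_G^{\tilde G}P^\bullet_M,\,\Ind_G^{\tilde G}P^\bullet_M[1])\;\cong\;\bigoplus_{\tilde g\in[\tilde G/G]}\Hom_{kG}(P^\bullet_M,\,\tilde g P^\bullet_M[1]),
\]
so $\Ind_G^{\tilde G}P^\bullet_M$ is 2-term presilting over $k\tilde G$ iff $\bigoplus_{\tilde g}\tilde g P^\bullet_M$ is 2-term presilting over $kG$, equivalently iff $\bigoplus_{\tilde g}\tilde g M$ is $\tau$-rigid over $kG$. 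The generation part of the silting condition, $\add(\Ind_G^{\tilde G}(P^{-1}\oplus P^0\oplus P)) = \add(k\tilde G)$, I would transfer to $\add(\bigoplus_{\tilde g}\tilde g(P^{-1}\oplus P^0\oplus P))=\add(kG)$ via Clifford theory, using that the indecomposable projective $k\tilde G$-modules biject with $\tilde G$-orbits of indecomposable projective $kG$-modules as indecomposable direct summands of induced modules. Combining both halves and applying AIR on each side then gives $\Ind_G^{\tilde G}M\in\sttilt k\tilde G$ iff $\bigoplus_{\tilde g}\tilde g M\in\sttilt kG$, with the $\tau$-rigidity of $M$ automatic since $M$ is a direct summand of the latter.

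The main obstacle I expect is to justify that $\Ind_G^{\tilde G}P^\bullet_M$ faithfully encodes $\Ind_G^{\tilde G}M$ under the AIR bijection on the $k\tilde G$-side, i.e., that $\Ind_G^{\tilde G}P^\bullet_M$ is silting whenever the AIR complex of $\Ind_G^{\tilde G}M$ is. This reduces to the preservation of projective covers under $\Ind_G^{\tilde G}$ (together with a Clifford-theoretic matching of the support projective summands), which should follow from the containment $J(kG)\cdot k\tilde G\subseteq J(k\tilde G)$---itself a consequence of the $\tilde G$-invariance of $J(kG)$, which makes $J(kG)\cdot k\tilde G$ a nilpotent two-sided ideal of $k\tilde G$.
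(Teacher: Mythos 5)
Your proposal takes a genuinely different route from the paper. The paper's proof is short and modular: it applies Mackey's formula to get $\Res_G^{\tilde G}\Ind_G^{\tilde G}M\cong\bigoplus_{\tilde g}\tilde g M$, then black-boxes two prior results -- Theorem 1.2 of \cite{https://doi.org/10.48550/arxiv.2301.04963} (restriction of relatively $G$-projective support $\tau$-tilting modules along $G\trianglelefteq\tilde G$) for the ``only if'' direction, and Theorem 3.2 of \cite{10.55937/sut/1670501315} (induction of $\tilde G$-invariant support $\tau$-tilting modules) for the ``if'' direction -- with the additive-equivalence $\Ind_G^{\tilde G}M=_{\add}\Ind_G^{\tilde G}\Res_G^{\tilde G}\Ind_G^{\tilde G}M$ tying things together. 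You instead propose a self-contained argument via the AIR correspondence with two-term silting complexes, transferring the presilting and generation conditions across $(\Ind_G^{\tilde G},\Res_G^{\tilde G})$ using the adjunction and Mackey. This is closer to how the black-boxed results are themselves proven, so it buys transparency at the cost of redoing that work. Your handling of the ``generation'' half and the preservation of radicals/projective covers via $J(kG)\,k\tilde G\subseteq J(k\tilde G)$ (from $\tilde G$-invariance of $J(kG)$ and nilpotence) is correct, and the Clifford-theoretic step -- though better phrased through relative $G$-projectivity of projective $k\tilde G$-modules than through an ``orbit bijection'' -- can be made to work: if $\add\bigl(\bigoplus_{\tilde g}\tilde g(P^{-1}\oplus P^0\oplus P)\bigr)=\add(kG)$ and $\tilde P$ is an indecomposable projective $k\tilde G$-module, then $\tilde P\mid\Ind_G^{\tilde G}\Res_G^{\tilde G}\tilde P$ and $\Res_G^{\tilde G}\tilde P\in\add(kG)$ force $\tilde P\in\add(\Ind_G^{\tilde G}(P^{-1}\oplus P^0\oplus P))$.

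There is, however, one genuine gap in the presilting half, in the ``$P$-part'' bookkeeping. You take $P$ to be the maximal projective summand of $kG$ with $\Hom_{kG}(P,M)=0$, which is the right choice for the AIR complex of $M$ over $kG$ alone. But $\Hom_{k\tilde G}(\Ind_G^{\tilde G}P,\Ind_G^{\tilde G}M)\cong\bigoplus_{\tilde g}\Hom_{kG}(P,\tilde g M)$, so $\Ind_G^{\tilde G}P$ is \emph{not} automatically contained in the maximal projective killing $\Ind_G^{\tilde G}M$ unless $\Hom_{kG}(P,\tilde g M)=0$ for \emph{all} $\tilde g$, which does not follow from $\Hom_{kG}(P,M)=0$. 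For the same reason your second ``iff'' -- that $\bigoplus_{\tilde g}\tilde g P^\bullet_M$ is presilting if and only if $\bigoplus_{\tilde g}\tilde g M$ is $\tau$-rigid -- is strictly too strong: presilting of the direct sum of stalk-augmented complexes also encodes $\Hom_{kG}(\tilde g P,\tilde h M)=0$ for all $\tilde g,\tilde h$, i.e.\ $\Hom_{kG}(P,\tilde g M)=0$ for all $\tilde g$, which again is an extra condition. Consequently the implication ``$\Ind_G^{\tilde G}M$ support $\tau$-tilting $\Rightarrow$ $\Ind_G^{\tilde G}P^\bullet_M$ silting'' is not justified as written. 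The fix is to replace $P$ by the maximal projective with $\Hom_{kG}(P,\tilde g M)=0$ for every $\tilde g\in[\tilde G/G]$ (equivalently the maximal $P$ with $\Hom_{k\tilde G}(\Ind_G^{\tilde G}P,\Ind_G^{\tilde G}M)=0$); with that choice the $P$-part is $\tilde G$-stable up to $\add$, your adjunction computation becomes a clean chain of equivalences, and the rest of your argument goes through.
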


As a natural question, we wonder if we get the block version of the above theorem for group algebras.
That is, we are interested in clarifying the set
\[
  (\tilde{B}\Ind_G^{\tilde{G}})^{-1}(\sttilt \tilde{B}):=\left\{ M\in B\lmod \setmid \tilde{B}\Ind_G^{\tilde{G}}M\in \sttilt \tilde{B} \right\}/=_{\add}.
\]
As a result, we get the block version of the theorem.
\begin{theorem}\label{main-theorem-2}
  Let \(G\) be a normal subgroup of a finite group \(\tilde{G}\), \(B\) a block of \(kG\) and \(\tilde{B}\) a block of \(k\tilde{G}\) covering \(B\).
  For a \(B\)-module \(M\), the module \(\tilde{B}\Ind_G^{\tilde{G}}M\) is a support \(\tau\)-tilting \(\tilde{B}\)-module if and only if \(M\) is a \(\tau\)-rigid \(B\)-module such that \(\bigoplus_{\tilde{g}\in[I_{\tilde{G}}(B)/G]} \tilde{g}M\) is a support \(\tau\)-tilting \(B\)-modules, that is, it holds that
  \[(\tilde{B}\Ind_G^{\tilde{G}})^{-1}(\sttilt \tilde{B})
    =\left\{ M\in \taurigid B \setmid \bigoplus_{\tilde{g}\in [I_{\tilde{G}}(B)/G]}\tilde{g}M \in \sttilt B \right\},\]
  where \(I_{\tilde{G}}(B)\) is the inertial group of \(B\) in \(\tilde{G}\).
\end{theorem}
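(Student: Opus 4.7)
The plan is to deduce Theorem~\ref{main-theorem-2} from Theorem~\ref{main-theorem-1} via the Fong--Reynolds Morita equivalence. Set $\tilde{H}:=I_{\tilde{G}}(B)$ and let $\tilde{b}$ be the block of $k\tilde{H}$ corresponding to $\tilde{B}$ under Fong--Reynolds, so $\Ind_{\tilde{H}}^{\tilde{G}}$ restricts to a Morita equivalence $\tilde{b}\lmod\xrightarrow{\sim}\tilde{B}\lmod$. Using the transitivity $\Ind_G^{\tilde{G}}=\Ind_{\tilde{H}}^{\tilde{G}}\circ\Ind_G^{\tilde{H}}$ and the block-wise Fong--Reynolds correspondence, I first identify
\[
  \tilde{B}\Ind_G^{\tilde{G}}M\;\cong\;\Ind_{\tilde{H}}^{\tilde{G}}\bigl(\tilde{b}\Ind_G^{\tilde{H}}M\bigr),
\]
so that by Morita invariance of support $\tau$-tilting modules, $\tilde{B}\Ind_G^{\tilde{G}}M$ is support $\tau$-tilting over $\tilde{B}$ iff $\tilde{b}\Ind_G^{\tilde{H}}M$ is support $\tau$-tilting over $\tilde{b}$. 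The problem is thereby reduced to working inside the inertial group $\tilde{H}$.

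Inside $\tilde{H}$, since $B$ is $\tilde{H}$-stable, $e_B$ is central in $k\tilde{H}$ and $\Ind_G^{\tilde{H}}M$ decomposes as $\bigoplus_i\tilde{b}_i\Ind_G^{\tilde{H}}M$ with $\tilde{b}_i$ running over the blocks of $k\tilde{H}$ covering $B$. A $k\tilde{H}$-module is support $\tau$-tilting iff each of its block components is, so $\Ind_G^{\tilde{H}}M$ is support $\tau$-tilting over $k\tilde{H}$ iff every $\tilde{b}_i\Ind_G^{\tilde{H}}M$ is support $\tau$-tilting over $\tilde{b}_i$. Applying Theorem~\ref{main-theorem-1} to the pair $(G,\tilde{H})$, and using that $B$ being $\tilde{H}$-stable forces each $\tilde{g}M$ with $\tilde{g}\in[\tilde{H}/G]$ to lie again in $B\lmod$, this is literally the RHS of Theorem~\ref{main-theorem-2}. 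The ($\Leftarrow$) direction then follows at once: the RHS forces every $\tilde{b}_i\Ind_G^{\tilde{H}}M$ to be support $\tau$-tilting, and in particular the chosen one $\tilde{b}\Ind_G^{\tilde{H}}M$.

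The main obstacle is the ($\Rightarrow$) direction: from the support $\tau$-tiltingness of a single block component $\tilde{b}\Ind_G^{\tilde{H}}M$ one must recover the RHS, which is phrased purely in terms of $M$ and $B$ and so does not depend on the chosen cover $\tilde{B}$. To close this gap I would first use the induction--restriction adjunction to transfer $\tau$-rigidity from $\tilde{b}\Ind_G^{\tilde{H}}M$ back to $M$, along the lines of \cite{MR4243358}; and then perform a careful count of indecomposable summands. The Fong--Reynolds Morita equivalence identifies the summand count of $\tilde{b}\Ind_G^{\tilde{H}}M$ over $\tilde{b}$ with a Mackey-theoretic count of $\tilde{H}/G$-orbits on the indecomposable summands of $\bigoplus_{\tilde{g}\in[\tilde{H}/G]}\tilde{g}M$, which combined with the $\tau$-rigidity of $M$ should upgrade the latter direct sum to a support $\tau$-tilting $B$-module, producing the RHS.
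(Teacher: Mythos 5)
Your first step is the same as the paper's: both proofs reduce to the inertial group $\tilde{H}=I_{\tilde{G}}(B)$ via the Fong--Reynolds correspondence and the isomorphism $\tilde{B}\Ind_G^{\tilde{G}}M\cong\Ind_{\tilde{H}}^{\tilde{G}}\bigl(\beta\Ind_G^{\tilde{H}}M\bigr)$ (the paper cites its companion paper's Proposition~4.7 for this), so that support $\tau$-tiltingness over $\tilde{B}$ is equivalent to that of the single block component over the Fong--Reynolds block $\beta$ of $k\tilde{H}$. Your $(\Leftarrow)$ direction is also fine: from the RHS, Theorem~\ref{main-theorem-1} for the pair $(G,\tilde{H})$ gives that $\Ind_G^{\tilde{H}}M$ is support $\tau$-tilting over $k\tilde{H}$, hence so is every block component, in particular $\beta\Ind_G^{\tilde{H}}M$; this is a legitimate (and slightly different) route from the paper, which instead invokes its earlier ``induction of inertial-invariant support $\tau$-tilting modules'' theorem directly. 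So far so good.

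The genuine gap is exactly where you flag it, in $(\Rightarrow)$, and the gap is larger than your sketch suggests. You have only that the single component $\beta\Ind_G^{\tilde{H}}M$ is support $\tau$-tilting; the route through Theorem~\ref{main-theorem-1} requires $\Ind_G^{\tilde{H}}M$ support $\tau$-tilting over all of $k\tilde{H}$, which would require \emph{every} block of $k\tilde{H}$ covering $B$ to produce a support $\tau$-tilting component, and there is no reason a priori that one good component forces the others. Your fallback plan --- transfer $\tau$-rigidity to $M$ by adjunction and then ``count summands'' --- does not close the gap either: what is needed on the RHS is $\tau$-rigidity of the whole sum $\bigoplus_{\tilde{g}}\tilde{g}M$, i.e.\ vanishing of $\Hom(\tilde{g}M,\tau\tilde{g}'M)$ for all pairs $\tilde g,\tilde g'$, and that does not follow from $\tau$-rigidity of $M$ alone; moreover, to upgrade a $\tau$-rigid module to support $\tau$-tilting you must also control the ``support'' projective $P$ in the pair $(N,P)$, which is not a pure count of indecomposable summands. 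The paper avoids all of this by working directly with the Fong--Reynolds block $\beta$: it shows $\Res_G^{\tilde{H}}\beta\Ind_G^{\tilde{H}}M$ has $M$ as a summand (Prop.~4.3(1) of arXiv:2301.04963), is $\tilde{H}$-invariant (Lemma~2.5 there), hence is additively equivalent to $\bigoplus_{\tilde{g}}\tilde{g}M$ by a sandwich argument, and finally that the restriction functor from $\beta$ to $B$ sends support $\tau$-tilting modules to support $\tau$-tilting modules under the condition $\beta\Ind\Res(\,\cdot\,)=_{\add}(\,\cdot\,)$ (Theorem~5.1 there). That restriction theorem is the key missing ingredient your sketch would need; without some substitute for it, your $(\Rightarrow)$ does not go through.
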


Throughout this paper, we use the following notation and terminologies.
Modules mean left modules.
The symbol \(k\) means an algebraically closed field of characteristic \(p>0\).
Let \(\Lambda\) be a finite-dimensional algebra.
For a \(\Lambda\)-module \(M\), we denote by \(\tau M\) the Auslander-Reiten translate of \(M\) and by \(\add M\) the full subcategory of the \(\Lambda\)-module category \(\Lambda\lmod\) whose objects are finite direct sums of direct summands of \(M\).
For \(\Lambda\)-modules \(M\) and \(M'\), we write \(M=_{\add} M'\) if \(\add M =\add M'\).
This relation is an equivalence relation, and we call this relation additive equivalence relation.
We denote by \(\stautilt \Lambda\) the set of additive equivalence classes of support \(\tau\)-tilting \(\Lambda\)-modules and by \(\taurigid \Lambda\) the one of \(\tau\)-rigid \(\Lambda\)-modules.
Let \(\tilde{G}\) be a finite group and \(G\) a normal subgroup of \(\tilde{G}\).
We denote \([\tilde{G}/G]\) a set of representatives of \((\tilde{G}/G)\).
For a \(kG\)-module \(M\) and \(\tilde{g}\in \tilde{G}\), we define a \(kG\)-module \(\tilde{g}M\) consisting of symbols \(\tilde{g}m\) as a set, where \(m\in M\),
and its \(kG\)-module structure is given by \(\tilde{g}m+\tilde{g}m':=\tilde{g}(m+m')\), \(\lambda(\tilde{g}m)=\tilde{g}(\lambda m)\) and \(g(\tilde{g}m):=\tilde{g}(\tilde{g}^{-1}g\tilde{g}m)\) for \(m, m'\in M, g\in G\) and \(\lambda\in k\).
We say a \(kG\)-module \(M\) is \(\tilde{G}\)-invariant if \(M\) is isomorphic to \(\tilde{g}M\) for any \(\tilde{g}\in \tilde{G}\).
For a block \(B\) of \(kG\), we denote by \(\inertiagp_{\tilde{G}}(B):=\left\{ \tilde{g}\in \tilde{G} \setmid \tilde{g}B\tilde{g}^{-1}=B \right\}\) the inertial group of \(B\) in \(\tilde{G}\).

\section{Proofs of the theorems}\label{proof}
We now give a proof of Theorem \ref{main-theorem-1}.
\begin{proof}[Proof of Theorem \ref{main-theorem-1}]
  Let \(M\) be a \(kG\)-module satisfying that \(\Ind_G^{\tilde{G}}M\) is a support \(\tau\)-tilting \(k\tilde{G}\)-module.
  Then it holds that \(\Res_G^{\tilde{G}}\Ind_G^{\tilde{G}}M\cong\bigoplus_{\tilde{g}\in[\tilde{G}/G]}\tilde{g}M\) by Mackey's decomposition formula (see \cite[Lemma 8.7.]{MR860771}).
  Moreover, \(\Ind_G^{\tilde{G}}M\) is relatively \(G\)-projective and satisfies that
  \[
    \Ind_G^{\tilde{G}}\Res_G^{\tilde{G}}\Ind_G^{\tilde{G}}M
    \cong\Ind_G^{\tilde{G}}(\bigoplus_{\tilde{g}\in[\tilde{G}/G]}\tilde{g}M)
    =_{\add}\Ind_G^{\tilde{G}}M.
  \]
  Hence, the restricted module \(\Res_G^{\tilde{G}}\Ind_G^{\tilde{G}}M\) is a support \(\tau\)-tilting \(kG\)-module by \cite[Theorem 1.2.]{https://doi.org/10.48550/arxiv.2301.04963}.
  In particular, the module \(M\) is a direct summand of the support \(\tau\)-tilting \(kG\)-module \(\Res_G^{\tilde{G}}\Ind_G^{\tilde{G}}M \cong \bigoplus_{\tilde{g}\in[\tilde{G}/G]}\tilde{g}M\).
  Therefore, the \(kG\)-module \(M\) is a \(\tau\)-rigid \(kG\)-module such that \(\bigoplus_{\tilde{g}\in[\tilde{G}/G]}\tilde{g}M\) is a support \(\tau\)-tilting \(kG\)-module.

  On the other hand, let \(M\) be a \(\tau\)-rigid \(kG\)-module such that \(\bigoplus_{\tilde{g}\in[\tilde{G}/G]}\tilde{g}M\) is a support \(\tau\)-tilting \(kG\)-module.
  By Mackey's decomposition formula again, \(\Res_G^{\tilde{G}}\Ind_G^{\tilde{G}}M \cong \bigoplus_{\tilde{g}\in[\tilde{G}/G]}\tilde{g}M\) is a support \(\tau\)-tilting \(kG\)-module,
  and it is a \(\tilde{G}\)-invariant \(kG\)-module clearly.
  Hence, the induced module \(\Ind_G^{\tilde{G}}\Res_G^{\tilde{G}}\Ind_G^{\tilde{G}}M\) is a support \(\tau\)-tilting \(k\tilde{G}\)-module by \cite[Theorem 3.2.]{10.55937/sut/1670501315}.
  Moreover, it holds that
  \[
    \Ind_G^{\tilde{G}}M =_{\add}
    \Ind_G^{\tilde{G}}(\bigoplus_{\tilde{g}\in[\tilde{G}/G]}\tilde{g}M) =_{\add}
    \Ind_G^{\tilde{G}}\Res_G^{\tilde{G}}\Ind_G^{\tilde{G}}M.
  \]
  Therefore, \(\Ind_G^{\tilde{G}}M\) is a support \(\tau\)-tilting \(k\tilde{G}\)-module.
\end{proof}
Let \(\Lambda\) be a finite-dimensional algebra.
For \(\Lambda\)-modules \(M\) and \(M'\), we write \(M \leq_{\add} M'\) if \(\add M\subseteq \add M'\).
This relation is clearly reflexive and transitive.
Moreover, if \(M \leq_{\add} M'\) and \(M' \leq_{\add} M\) then \(M=_{\add}M'\) for any \(\Lambda\)-modules \(M\) and \(M'\).
We give a proof of Theorem \ref{main-theorem-2}.
\begin{proof}[Proof of Theorem \ref{main-theorem-2}]
  Let \(M\) be a \(B\)-module satisfying that \(\tilde{B}\Ind_G^{\tilde{G}}M\) is a support \(\tau\)-tilting \(\tilde{B}\)-module and \(\beta\) the block of \(kI_{\tilde{G}}(B)\) satisfying that
  \[
    1_{\tilde{B}}=\sum_{x\in[\tilde{G}/I_{\tilde{G}}(B)]}x1_{\beta}x^{-1},
  \]
  where \(1_{\tilde{B}}\) and \(1_{\beta}\) mean the respective identity elements of \(\tilde
  {B}\) and \(\beta\).
  By \cite[Proposition 4.7.]{https://doi.org/10.48550/arxiv.2301.04963}, we have that \(\tilde{B}\Ind_{G}^{\tilde{G}}M\cong \Ind^{\tilde{G}}_{I_{\tilde{G}}(B)}\beta \Ind_{G}^{I_{\tilde{G}}(B)}M\).
  Since the functor
  \[
    \begin{tikzcd}[row sep=0pt]
      \beta\lmod\ar[r] &\tilde{B}\lmod\\
      X\ar[r,mapsto]&\Ind_{I_{\tilde{G}}(B)}^{\tilde{G}} X
    \end{tikzcd}
  \]
  gives a Morita equivalence by \cite[Theorem 5.12.]{MR998775}, the \(\beta\)-module \(\beta \Ind_{G}^{I_{\tilde{G}}(B)}M\) is a support \(\tau\)-tilting \(\beta\)-module.
  Furthermore, by \cite[Proposition 4.3.(1)]{https://doi.org/10.48550/arxiv.2301.04963}, the restricted module \(\Res_{G}^{I_{\tilde{G}}(B)}\beta \Ind_{G}^{I_{\tilde{G}}(B)}M\) has a direct summand isomorphic to \(M\).
  Since the restricted module \(\Res_{G}^{I_{\tilde{G}}(B)}\beta \Ind_{G}^{I_{\tilde{G}}(B)}M\) is \(I_{\tilde{G}}(B)\)-invariant by \cite[Lemma 2.5.]{https://doi.org/10.48550/arxiv.2301.04963}, we get that
  \begin{equation*}
    \bigoplus_{\tilde{g}\in[I_{\tilde{G}}(B)/G]}\tilde{g}M\leq_{\add}\Res_{G}^{I_{\tilde{G}}(B)}\beta \Ind_{G}^{I_{\tilde{G}}(B)}M \leq_{\add} \Res_{G}^{I_{\tilde{G}}(B)}\Ind_{G}^{I_{\tilde{G}}(B)}M \cong \bigoplus_{\tilde{g}\in[I_{\tilde{G}}(B)/G]}\tilde{g}M.
  \end{equation*}
  Hence, we conclude that
  \begin{equation*}
    \Res_{G}^{I_{\tilde{G}}(B)}\beta \Ind_{G}^{I_{\tilde{G}}(B)}M=_{\add}\bigoplus_{\tilde{g}\in[I_{\tilde{G}}(B)/G]}\tilde{g}M.
  \end{equation*}
  Moreover, it holds that
  \begin{equation*}
    \beta\Ind_{G}^{I_{\tilde{G}}(B)}\Res_{G}^{I_{\tilde{G}}(B)}\beta \Ind_{G}^{I_{\tilde{G}}(B)}M
    =_{\add}\beta\Ind_{G}^{I_{\tilde{G}}(B)}\bigoplus_{\tilde{g}\in[I_{\tilde{G}}(B)/G]}\tilde{g}M
    =_{\add}\beta\Ind_{G}^{I_{\tilde{G}}(B)}M.
  \end{equation*}
  Hence, the restricted modules \(\Res_{G}^{I_{\tilde{G}}(B)}\beta \Ind_{G}^{I_{\tilde{G}}(B)}M\), which is additive equivalent to \(\bigoplus_{\tilde{g}\in[I_{\tilde{G}}(B)/G]}\tilde{g}M\) is support \(\tau\)-tilting \(B\)-module by \cite[Theorem 5.1.]{https://doi.org/10.48550/arxiv.2301.04963}.
  Furthermore, we get that the \(B\)-module \(M\) is \(\tau\)-rigid because it is a direct summand of the support \(\tau\)-tilting \(B\)-module \(\bigoplus_{\tilde{g}\in[I_{\tilde{G}}(B)/G]}\tilde{g}M\).

  On the other hand, let \(M\) be a \(\tau\)-rigid \(B\)-module such that \(\bigoplus_{\tilde{g}\in[I_{\tilde{G}}(B)/G]}\tilde{g}M\) is a support \(\tau\)-tilting \(kG\)-module.
  Since the \(B\)-module \(\bigoplus_{\tilde{g}\in[\tilde{G}/G]}\tilde{g}M\) is \(I_{\tilde{G}}(B)\)-invariant, we get that \(\tilde{B}\Ind_G^{\tilde{G}}M\), which is additive equivalent to \(\tilde{B}(\Ind_G^{\tilde{G}}(\bigoplus_{\tilde{g}\in[I_{\tilde{G}}(B)/G]}\tilde{g} M))\), is a support \(\tau\)-tilting \(\tilde{B}\)-module by \cite[Main Theorem 1.1.]{10.55937/sut/1670501315}.
\end{proof}

From now on, we use the following notation:
Let \(\Lambda\) be a finite-dimensional algebra and \(S, T\) simple \(\Lambda\)-modules.
We denote an indecomposable \(\Lambda\)-module \(M\) such that the composition length of \(M\) is equal to \(2\), \(\rad M \cong T\) and that \(M/\rad M \cong S\) by \(\begin{bmatrix}S\\T \end{bmatrix}\).
We remark that such \(M\) is unique up to isomorphism if \(\dim \Ext_{\Lambda}^1(S, T)=1\).

\begin{example}\label{example-1}
  Let \(k\) be an algebraically closed field of characteristic \(p=2\),
  \(G\) the alternating group \(A_4\) of degree \(4\) and \(\tilde{G}\) the symmetric group \(S_4\) of degree \(4\).
  Moreover, let \(S\) and \(T\) be non-isomorphic simple \(kG\)-modules to each other which are not isomorphic to the trivial \(kG\)-module,
  and \(S_2\) be a \(2\)-dimensional simple \(k\tilde{G}\)-module.
  As is well known, \(\sigma S\) is isomorphic to \(T\)
  and \(\sigma T\) is isomorphic to \(S\) for any \(\sigma \in \tilde{G}\setminus G\).
  In this setting,
  \(M:=k_G\oplus \begin{bmatrix} k_{G} \\S \end{bmatrix}\oplus \begin{bmatrix} k_{G} \\T \end{bmatrix}\)
  is a \(\tilde{G}\)-invariant support \(\tau\)-tilting \(kG\)-module.
  Hence, \(\Ind_G^{\tilde{G}}M\) is a support \(\tau\)-tilting \(k\tilde{G}\)-module by \cite[Main Theorem 1]{10.55937/sut/1670501315}.
  Also, let \(N_1:=k_G\oplus \begin{bmatrix} k_{G} \\S \end{bmatrix}\) and \(N_2:=k_G\oplus \begin{bmatrix} k_{G} \\T \end{bmatrix}\).
  Then \(N_1\) and \(N_2\) are support \(\tau\)-tilting \(kG\)-modules but are not \(\tilde{G}\)-invariant (hence we can not apply \cite[Main Theorem 1]{10.55937/sut/1670501315}).
  However, it holds that \(\bigoplus_{\sigma\in[\tilde{G}/G]}\sigma N_i=_{\add}M\)
  and in fact \(\Ind_G^{\tilde{G}}N_i\) is a support \(\tau\)-tilting \(k\tilde{G}\)-module
  for \(i=1, 2\).
\end{example}

\begin{remark}
  By Theorem \ref{main-theorem-1}, the set \((\Ind_G^{\tilde{G}})^{-1}(\sttilt k\tilde{G})\) is equal to the following set:
  \begin{equation}\label{set rig group}
    \left\{ M\in \taurigid kG \setmid \bigoplus_{\tilde{g}\in [\tilde{G}/G]}\tilde{g}M \in \sttilt kG \right\} .
  \end{equation}
  As a natural question, we wonder if the set \eqref{set rig group} coincides with the following set or not:
  \begin{equation}\label{set sta group}
    \left\{ M\in \stautilt kG \setmid \bigoplus_{\tilde{g}\in [\tilde{G}/G]}\tilde{g}M \in \sttilt kG \right\}.
  \end{equation}
  The set \eqref{set sta group} is contained in the set \eqref{set rig group} clearly, but the two sets do not coincide in general.
  In fact, in the same setting as Example \ref{example-1},
  let \(M:=\begin{bmatrix}S\\T \end{bmatrix}\).
  Then we can see that \(M\) is a \(\tau\)-rigid \(kG\)-module and that \(\bigoplus_{\tilde{g}\in[\tilde{G}/G]}\tilde{g}M\) is a support \(\tau\)-tilting \(kG\)-module by the calculations, but \(M\) is not a support \(\tau\)-tilting \(kG\)-module (see \cite[Proposition 1.8]{MR3461065}).
  On the other hand, the induced module \(\Ind_G^{\tilde{G}}M\) of \(M\) is a support \(\tau\)-tilting \(k\tilde{G}\)-module.
  Therefore, \(M\) is an element of the set \eqref{set rig group} but not the set \eqref{set sta group}.
  Also, since the principal blocks of \(kG\) and \(k\tilde{G}\) are themselves, respectively, the following two sets do not coincide in general:
  \begin{align}
     & \left\{ M\in \taurigid B \setmid \bigoplus_{\tilde{g}\in [I_{\tilde{G}}(B)/G]}\tilde{g}M \in \sttilt B \right\},\label{set rig block} \\
     & \left\{ M\in \stautilt B \setmid \bigoplus_{\tilde{g}\in [I_{\tilde{G}}(B)/G]}\tilde{g}M \in \sttilt B \right\}.\label{set sta block}
  \end{align}
\end{remark}

% \bibliographystyle{abbrvdoi}
% \bibliography{Cite}

\begin{thebibliography}{10}
  \urlstyle{rm} % format URLs in normal font
  \DeclareUrlCommand\DOIasURL{\urlstyle{rm}} % format DOIs same
  \ifx\undefined\hypersetup% hyperref is not loaded
    \newcommand\DOIbbl[1]{\DOIasURL{#1}}
  \else% hyperref is loaded, we can put a link under the DOI
    \newcommand\DOIbbl[1]{\href{http://dx.doi.org/#1}{\DOIasURL{#1}}}
  \fi

  \bibitem{MR3461065}
  T.~Adachi.
  \newblock The classification of {$\tau$}-tilting modules over {N}akayama
  algebras.
  \newblock {\em J. Algebra}, v.452 pages 227--262, 2016.
  \newblock DOI \DOIbbl{10.1016/j.jalgebra.2015.12.013}.

  \bibitem{MR3187626}
  T.~Adachi, O.~Iyama, and I.~Reiten.
  \newblock {$\tau$}-tilting theory.
  \newblock {\em Compos. Math.}, 150(3) pages 415--452, 2014.
  \newblock DOI \DOIbbl{10.1112/S0010437X13007422}.

  \bibitem{MR860771}
  J.~L. Alperin.
  \newblock {\em Local representation theory}, volume~11 of {\em Cambridge
      Studies in Advanced Mathematics}.
  \newblock Cambridge University Press, Cambridge, 1986.
  \newblock DOI \DOIbbl{10.1017/CBO9780511623592}.

  \bibitem{MR4139031}
  S.~Asai.
  \newblock Semibricks.
  \newblock {\em Int. Math. Res. Not. IMRN}, 2020(16) pages 4993--5054, 2018.
  \newblock DOI \DOIbbl{10.1093/imrn/rny150}.

  \bibitem{MR3220536}
  T.~Br\"{u}stle and D.~Yang.
  \newblock Ordered exchange graphs.
  \newblock In {\em Advances in representation theory of algebras}, EMS Ser.
  Congr. Rep., pages 135--193. Eur. Math. Soc., Z\"{u}rich, 2013.

  \bibitem{MR3178243}
  S.~Koenig and D.~Yang.
  \newblock Silting objects, simple-minded collections, {$t$}-structures and
  co-{$t$}-structures for finite-dimensional algebras.
  \newblock {\em Doc. Math.}, v.19 pages 403--438, 2014.

  \bibitem{10.55937/sut/1670501315}
  R.~Koshio.
  \newblock On induced modules of inertial-invariant support {$\tau$}-tilting
  modules over blocks of finite groups.
  \newblock {\em SUT Journal of Mathematics}, 58(2) pages 157--171, 2022.
  \newblock DOI \DOIbbl{10.55937/sut/1670501315}.

  \bibitem{KK2}
  R.~Koshio and Y.~Kozakai.
  \newblock Induced modules of support {$\tau$}-tilting modules and extending
  modules of semibricks over blocks of finite groups, 2021.
  \newblock DOI \DOIbbl{10.48550/ARXIV.2112.08897}.

  \bibitem{MR4243358}
  R.~Koshio and Y.~Kozakai.
  \newblock On support {$\tau$}-tilting modules over blocks covering cyclic
  blocks.
  \newblock {\em J. Algebra}, v.580 pages 84--103, 2021.
  \newblock DOI \DOIbbl{10.1016/j.jalgebra.2021.03.021}.

  \bibitem{https://doi.org/10.48550/arxiv.2301.04963}
  R.~Koshio and Y.~Kozakai.
  \newblock Normal subgroups and support {$\tau$}-tilting modules, 2023.
  \newblock DOI \DOIbbl{10.48550/ARXIV.2301.04963}.

  \bibitem{MR998775}
  H.~Nagao and Y.~Tsushima.
  \newblock {\em Representations of finite groups}.
  \newblock Academic Press, Inc., Boston, MA, 1989.

  \bibitem{okuyama1997some}
  T.~Okuyama.
  \newblock Some examples of derived equivalent blocks of finite groups.
  \newblock {\em Preprint}, 1997.

  \bibitem{MR1947972}
  J.~Rickard.
  \newblock Equivalences of derived categories for symmetric algebras.
  \newblock {\em J. Algebra}, 257(2) pages 460--481, 2002.
  \newblock DOI \DOIbbl{10.1016/S0021-8693(02)00520-3}.

\end{thebibliography}

% abbrvDOI.bst by David Kotz.
% You must add this to your latex preamble:
%    \usepackage{url}
% or, if you want hyperlinks:
%    \usepackage{hyperref}

\Addresses

\end{document}